\documentclass[11pt,a4paper]{amsart}
\setlength{\topmargin}{9.6mm} \setlength{\headheight}{0mm}
\setlength{\headsep}{0mm} \setlength{\footskip}{15mm}
\setlength{\textheight}{231mm} \setlength{\oddsidemargin}{4.6mm}
\setlength{\evensidemargin}{4.6mm} \setlength{\marginparsep}{0mm}
\setlength{\marginparwidth}{0mm} \setlength{\textwidth}{149mm}

\usepackage{amscd,amssymb,amsopn,amsmath,amsthm,graphics,amsfonts,enumerate,verbatim,calc}
\usepackage[dvips]{graphicx}
\input xy
\xyoption{all}

\usepackage{amssymb,amsmath}

\textwidth=16.cm \textheight=23 cm \topmargin=0.00cm
\oddsidemargin=0.00cm \evensidemargin=0.00cm \headheight=18.4pt
\headsep=1cm \numberwithin{equation}{section}
\hyphenation{semi-stable} \emergencystretch=10pt

\newtheorem{theorem}{Theorem}[section]
\newtheorem{lemma}[theorem]{Lemma}
\newtheorem{proposition}[theorem]{Proposition}
\newtheorem{corollary}[theorem]{Corollary}

\theoremstyle{definition}
\newtheorem{definition}[theorem]{Definition}
\theoremstyle{remark}
\newtheorem{remark}[theorem]{Remark}
\newtheorem{example}[theorem]{Example}

\newtheorem{acknowledgement}{Acknowledgement}

\newcommand{\im}{\operatorname{im}}

\newcommand{\pgrade}{\operatorname{p.grade}}
\newcommand{\Kgrade}{\operatorname{K.grade}}

\newcommand{\cgrade}{\operatorname{c.grade}}

\newcommand{\Spec}{\operatorname{Spec}}

\newcommand{\Ht}{\operatorname{ht}}

\newcommand{\V}{\operatorname{V}}

\newcommand{\Supp}{\operatorname{Supp}}
\newcommand{\Tor}{\operatorname{Tor}}

\newcommand{\fm}{\frak{m}}
\newcommand{\fp}{\frak{p}}

\newcommand{\fa}{\frak{a}}
\newcommand{\fb}{\frak{b}}

\newcommand{\fn}{\frak{n}}

\newcommand{\ZZ}{{\mathbb Z}}
\newcommand{\RR}{{\mathbb R}}

\begin{document}
\author[Asgharzadeh]{Mohsen Asgharzadeh }
\author[Bhattacharyya]{Rajsekhar Bhattacharyya $^\ast$}

\title[Applications of closure operations on big Cohen-Macaulay Algebras]
{Applications of closure operations on big Cohen-Macaulay Algebras}

\address{School of Mathematics, Institute for Research in Fundamental Sciences (IPM), P.O. Box 19395-5746, Tehran, Iran.}
\email{asgharzadeh@ipm.ir}
\address{Dinabandhu Andrews College, Kolkata 700084, India}
\email{rbhattacharyya@gmail.com}
\subjclass[2000]{13C14, 13A35 }

\keywords{Cohen-Macaulayness, Closure operation, Non-Noetherian rings, Perfect algebras.\\$^\ast$ Corresponding author}

\begin{abstract}
In this note we present some remarks on big Cohen-Macaulay algebras and almost Cohen-Macaulay algebras via closure operations on the ideals and the modules. Our methods for doing these are inspired by the notion of the tight closure and the dagger closure and by the ideas of Northcott on dropping of the Noetherian assumption of certain homological properties.
\end{abstract}

\maketitle

\section{Introduction}

Throughout this paper all rings are commutative, associative, with
identity, and all modules are unital. We always denote a commutative Noetherian ring by $R$ and an $R$-algebra (not necessarily Noetherian) by $A$.

The \emph{perfect closure} of a reduced ring $A$ of prime characteristic $p$ is defined by adjoining all the higher $p$-power roots of all the elements of $A$, to $A$ itself. We denote it by $A^\infty$. Observe the definition of coherent ring and non-Noetherian regular ring in Section 2. In Section 2, we show that the Frobenius map is flat over a coherent regular ring which is of prime characteristic.

In view of \cite[page 36]{E}, one may ask for a generalization of Tight Closure via \emph{Colon-Capturing} with respect to some non-Noetherian Cohen-Macaulay rings. Let $R$ be a  Noetherian local F-coherent domain which is either excellent or  homomorphic image of a Gorenstein local ring. If $\pgrade_A(\fa,M)$ is the  \emph{polynomial grade} of an ideal $\fa$ on an $A$-module $M$ (see Section 2 for the definition) then by applying a version of colon-capturing for $R^{\infty}$, in Theorem \ref{main} of Section 3, we show that $\Ht(\fa)=\pgrade_{R^{\infty}}(\fa, R^{\infty})$ for every ideal $\fa$ of $R^{\infty}$. On regular rings, this result follows easily by using a result of Kunz (which is true only for regular rings), see e.g. \cite[Theorem 4.5]{AT}. Following \cite{Sh}, a ring $R$ is called \emph{F-coherent} if $R^{\infty}$ is a coherent ring. In particular, Theorem \ref{main} provides an evidence  for Shimomoto's question: Does the perfect closure of an F-coherent domain coincide with the perfect closure of some regular Noetherian rings?

The \emph{Dagger closure} was introduced by Hochster and Huneke in \cite{HH}. It has an interesting connection with the theory of vector bundles, see e.g. \cite{BS} and the theory of \emph{tight closure}, see e.g. \cite{HH} and \cite{HH2}.  Let $I$ be an ideal of  $R$ and let $R^{+}$ be its integral closure in an algebraic closure of its fraction field. We recall  that an element $x\in R$ is in $I^{\dag}$, the dagger closure of $I$, if there are elements $\epsilon_n\in R^{+}$ of arbitrarily small order such that $\epsilon_nx\in IR^{+}$. By the main result of \cite{HH}, the tight closure of an ideal coincides with the dagger closure, where $R$ is complete and of prime characteristic $p$. In Section 4, we extend that notion to the submodules of finitely generated modules over $R$ to prove that if a complete local domain is contained in an almost Cohen-Macaulay domain then there exists a balanced big Cohen-Macaulay module over it (see Corollary \ref{main2}). In \cite{Ho}, Hochster showed the existence of a big Cohen-Macaulay algebra from the existence of an almost Cohen-Macaulay algebra in dimension 3. Corollary \ref{main2} extends this result, by proving the existence of big Cohen-Macaulay modules from the existence of almost Cohen-Macaulay algebras in any dimension $d$.

In \cite{S1}, certain weakly almost Cohen-Macaulay $R^+$-algebra is constructed and it has been asked whether it is almost Cohen-Macaulay or not. In Section 5, we obtain that such a weakly almost Cohen-Macaulay $R^+$-algebra will be almost Cohen-Macaulay if and only if it maps to some big Cohen-Macaulay $R^+$-algebra. For more details see Remark 5.4.

\section{Preliminary Notations}

We begin this section by exploring the following definitions.

Let $A$ be an algebra  equipped with a map $v:A\to\RR \cup \{\infty\}$ satisfying $(1)$
$v(ab)=v(a)+v(b)$ for all $a,b \in A$;
$(2)$
$v(a+b) \ge \min\{v(a),v(b)\}$ for all $a,b \in A$ and
$(3)$
$v(a)=\infty$ if and only if $a=0$. The map $v$ is called the \emph{value map} on $A$.
Moreover, if $v(a) \ge 0$ for every $a \in T$ and $v(a) > 0$ for every non-unit $a \in A$, then we say that $v$ is \emph{normalized}. If  $A$ is equipped with a normalized value map, then by \cite[Proposition 3.1]{AS}, one has $\bigcap_{n=1}^{\infty} \fa^n=0$  where $\fa$ is proper and finitely generated ideal of $A$.

Let $A$ be of prime characteristic $p$ and $I$ be an  ideal of $A$. Set $A^0$ to be the  complement of the set of all minimal primes of  $A$ and $I^{[q]}$ to be the ideal generated by $q=p^e$-th powers of all elements of $I$. Then the tight closure $I^\ast$ of $I$ is the set of $x\in A$ such that there exists $c \in A^0$  such that $cx^q\in I^{[q]}$ for $q\gg 0$.

We also recall the following definitions: a ring is called coherent if each of its finitely generated ideal is finitely presented and a ring is called \emph{regular} if each of its finitely generated ideal is of finite projective dimension.

\begin{lemma} \label{sh1}
Let $A$ be a  coherent regular ring of prime characteristic $p$.
Then the following statements hold. \begin{enumerate}
\item[$\mathrm{(i)}$]  The Frobenius map is flat.
\item[$\mathrm{(ii)}$]  If $A$ is equipped with a normalized value map, then all finitely generated ideals of $A$ are tightly closed.
\end{enumerate}
\end{lemma}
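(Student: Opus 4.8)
The plan is to prove (i) by a local-to-global argument reducing to the characterization of regularity of a Noetherian local ring via flatness of Frobenius, and then to derive (ii) as a fairly direct consequence of (i) together with the Krull-intersection-type property guaranteed by the normalized value map.

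For (i), recall that flatness of a module over $A$ can be tested locally: the Frobenius $F:A\to A$ is flat if and only if $A_{\fp}$ is flat over $A$ viewed through $F$ for every prime $\fp$ (equivalently, after localizing, we test on the images $F^{-1}(\fp)$). The point of coherence is that over a coherent ring a module is flat if and only if it is locally flat at all finitely generated ideals, and more importantly that localizations $A_{\fp}$ of a coherent regular ring at a prime are again "regular" in the relevant sense, with the added feature that a coherent local ring in which every finitely generated ideal has finite projective dimension behaves, on finitely generated ideals, like a Noetherian regular local ring. So first I would localize at a prime $\fp$; then I would show that the hypothesis "every finitely generated ideal has finite projective dimension" passes to $A_{\fp}$ (projective dimension can only drop under localization, and finite generation/presentation is preserved by coherence). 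Next, for a finitely generated ideal $I\subseteq A_{\fp}$ one has a finite free resolution, and one can run Kunz's argument: apply the Frobenius functor $F(-)=A\otimes_{F}-$ to this finite free resolution and show the higher $\Tor$ vanish, using that the resolution is a resolution of $A_{\fp}/I$ by finitely generated frees and that acyclicity is preserved because the relevant ideals still satisfy the depth/grade conditions coming from finite projective dimension (the Auslander--Buchsbaum-type equality, or equivalently the Buchsbaum--Eisenbud acyclicity criterion, which is purely about grades of ideals of minors and makes sense over coherent rings). Concluding, $\Tor_1^{F}(A,A_{\fp}/I)=0$ for all finitely generated $I$, hence $F$ is flat over $A_{\fp}$; since this holds at every prime, $F$ is flat over $A$.

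For (ii), let $I$ be a finitely generated ideal of $A$ and let $x\in I^{\ast}$. By definition there is $c$ in the complement of the minimal primes with $cx^{q}\in I^{[q]}$ for all $q\gg 0$. Because $F$ is flat by part (i), the standard argument gives $I^{[q]}:_A x^{q}=(I:_A x)^{[q]}$ (flatness of Frobenius makes $I^{[q]}=IA$ via the flat base change $F$, so colon ideals are computed by base change), hence $c\in(I:_A x)^{[q]}$ for all $q\gg 0$, i.e. $c\in\bigcap_{q}(I:_A x)^{[q]}$. Now set $\fa=I:_A x$, a finitely generated ideal; if $\fa$ were proper, then $\bigcap_{n}\fa^{n}=0$ by \cite[Proposition 3.1]{AS} (using the normalized value map), and since $\fa^{[q]}\subseteq\fa^{q}$ we would get $c\in\bigcap_{q}\fa^{[q]}\subseteq\bigcap_{n}\fa^{n}=0$, forcing $c=0$, contradicting that $c$ avoids the minimal primes. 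Therefore $\fa=A$, that is $x\in I$, and $I$ is tightly closed.

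The main obstacle is the first part: making Kunz's theorem work without the Noetherian hypothesis. One must be careful that the homological machinery invoked — finite free resolutions of $A/I$ for finitely generated $I$, the acyclicity criterion after applying the Frobenius functor, and the interplay between $\operatorname{p.dim}$, depth/grade, and the grades of ideals of minors — all genuinely hold over a coherent regular ring, rather than being silently imported from the Noetherian theory. I expect this to go through essentially because coherence is exactly the hypothesis that keeps finite generation and finite presentation stable under the operations used (kernels of maps of finitely generated frees stay finitely generated), so the Buchsbaum--Eisenbud criterion and the standard Frobenius-on-a-complex computation remain valid; but verifying these compatibility points cleanly, and citing the right non-Noetherian versions (e.g. the grade-sensitivity results and the behaviour of polynomial grade under the Frobenius base change), is where the real work lies. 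Part (ii), by contrast, is a short formal deduction once flatness of Frobenius and the value-map intersection property are in hand.
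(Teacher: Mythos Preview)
Your approach is essentially the same as the paper's: both parts hinge on the Buchsbaum--Eisenbud--Northcott acyclicity criterion (valid over any commutative ring, \cite[Theorem~9.1.6]{BH}) applied to a finite free resolution of $A/\fa$ and to its Frobenius twist, and part~(ii) is argued identically via $(\fa^{[q]}:x^q)=(\fa:x)^{[q]}$ and the intersection property coming from the value map.

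Two small points of divergence. First, the paper does \emph{not} localize: it works directly over $A$, takes a bounded finite free resolution $(F_\bullet,d_\bullet)$ of $A/\fa$ (available by coherence plus regularity), observes that $(F_\bullet,d_\bullet)\otimes_A F(A)=(F_\bullet,d_\bullet^{\,p})$, and then checks the acyclicity criterion for the twisted complex. Your localization detour is unnecessary and carries a hidden obligation, since localizations of coherent rings need not be coherent; you would have to argue separately that finite free resolutions of $A_\fp/IA_\fp$ are obtained by localizing resolutions over $A$. Second, the paper makes explicit the one line you leave vague: the ideals of minors $I_{r_i}(d_i)$ and $I_{r_i}(d_i^{\,p})$ have the same radical, hence the same Koszul grade, so the grade conditions in the acyclicity criterion transfer from $(F_\bullet,d_\bullet)$ to $(F_\bullet,d_\bullet^{\,p})$. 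That is the crux of ``Kunz without Noetherian'', and it is worth stating rather than gesturing at ``depth/grade conditions coming from finite projective dimension''.
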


\begin{proof}

$\mathrm{(i)}$: This proof is inspired by a famous result of Kunz \cite[Corollary 8.2.8]{BH},
which characterizes Noetherian regular rings of prime characteristic in the term of Frobenius map.
  First recall that the assignment $a\mapsto a^p$ defines the Frobenius map $F:A\to A$ which is a ring homomorphism.  By $F(A)$, we mean $A$ as a group equipped with  left and right scalar multiplication from $A$
given by
$$a.r\star b = ab^pr, \  \ where \ \ a,b\in A  \ \ and \  \ r\in F(A),$$
see also \cite[Section 8.2]{BH}. We show that for all $i > 0$, $\Tor^A_i(A/\fa, F(A))=0$ for all finitely generated ideals $\fa\subset A$. Note that $A/\fa$  has a  free resolution $(F_\bullet,d_\bullet)$ consisting of finitely generated modules, since $A$ is coherent. Such a resolution is bounded, because $A$ is regular. Then $(F_\bullet,d_\bullet)\otimes_A F(A)=(F_\bullet,d_\bullet^p)$.

Consider the ideal $I_{t}((a_{ij}))$ generated by the $t\times t$ minors of a matrix $(a_{ij})$ and recall the definition of Koszul grade, which is denoted by $\Kgrade$. One can show that it is unique up to the radical, see \cite[Proposition 2.2 (vi)]{BH}. Let $r_i$ be the expected rank of $d_\bullet$, see \cite[Section 9.1]{BH}. Clearly, $r_i$ is the expected rank of $d_\bullet^p$. Now, all these facts together imply, $$\Kgrade_A(I_{r_i}(d_i), A)=\Kgrade_A(I_{r_i}(d_i^p ),A).$$
In view of \cite[Theorem 9.1.6]{BH}, which is a beautiful theorem of Buchsbaum-Eisenbud-Northcott, we find that $(F_\bullet,d_\bullet^p)$ is exact and so $\Tor^A_i(A/\fa, F(A))=0$.

$\mathrm{(ii)}$: This proof is inspired by \cite[Lemma 4.1]{Sh}. Let $\fa$ be a proper and finitely generated ideal of $A$. Suppose that $\fa^\ast\neq\fa$. Take $x\in \fa^\ast\setminus \fa$. From the definition and from part $(i)$ of the proof, it follows that there exists an element $c \in A^0$  such that $c\in( \fa^{[q]}:_Ax^q)=(\fa:_Ax)^{[q]}$ for $q\gg 0$. We recall that $\bigcap_{n=1}^{\infty} \fb^n=0$ for any proper and finitely generated ideal of $A$, since $A$ is equipped with a normalized value map. Here $(\fa:_Ax)$ is proper, since $x\notin \fa$ and it is also finitely generated, since $A$ is coherent. These facts together imply $c\in\bigcap (\fa:_Ax)^{[q]}\subseteq\bigcap (\fa:_Ax)^q=0$ and thus we arrive at a contradiction.
\end{proof}

It is to be noted that the finitely generated assumption of the previous result is really
needed.
\begin{example}\label{exam}
Let $(R,\fm)$ be a Noetherian regular local ring of prime characteristic $p$ which is not a field. Let $R^\infty$ be its  perfect closure. In view of
 \cite[Theorem 1.2]{A}, $R^\infty$ is coherent and of finite global dimension. This is well-known that, $R^\infty$ is equipped with a normalized value map. To construct such a value map, consider a discreet valuation valuation ring $(V,\fn)$, which birationally dominates $R$, i.e., $R\subseteq V$, $\fn\cap R=\fm$ and both $R$ and $V$ have the same field of fractions. We know that the above situation exists. Let $v$ be a value map of $V$ and take $y\in R$ such that $v(y)=\ell\in \mathbb{N}\setminus\{0\}$. Let $r\in R^\infty$. Then $r^{p^n}\in R$ for some $n$. The assignment $r\mapsto v(r^{p^n})/p^n$ defines a normalized value map on $R^\infty$.
Consider the ideal $\fa:=\{x\in R^\infty:v(x)>\ell/p\}$. Here, we show that $\fa^\ast\neq \fa$. To see this, let $x\in R^\infty$ be the p-th root of $y\in R$. Here $v(x)=\ell/p$.  Take $c\in R^\infty$ with $v(c)>0$. Clearly,  $v(c^{1/q}x)>\ell/p$ and so  $c^{1/q}x\in \fa.$  Thus $cx^q=\prod_q c^{1/q}x\in\fa^{[q]}$ for $q\gg 0$. Therefore, $x\in \fa^\ast\setminus \fa$. By Lemma~\ref{sh1}, $\fa$ can not be finitely generated. Here, we show (directly) that $\fa$ is not finitely generated. The proof of this fact goes along the same line of that of \cite[Proposition 6.8]{A}, which gives a similar result over the absolute integral closure of $R$. We leave the details to the reader.
\end{example}

\section{Cohen-Macaulayness of Minimal Perfect Algebras}

We begin this section with the following lemma.

\begin{lemma}  \label{sh}
Let $(R,\fm)$ be a  Noetherian local F-coherent domain which is either excellent or homomorphic image of a Gorenstein local ring. Then $R^{\infty}$ is balanced  big Cohen-Macaulay.
\end{lemma}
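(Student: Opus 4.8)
The plan is to check the two defining conditions of a \emph{balanced big Cohen--Macaulay algebra}: $R^{\infty}\neq\fm R^{\infty}$, and every system of parameters of $R$ is an $R^{\infty}$-regular sequence; the second is the substantial one, and I will prove it in the uniform form needed for ``balanced''.

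For $R^{\infty}\neq\fm R^{\infty}$ I would argue as in Example~\ref{exam}. A Noetherian local domain is birationally dominated by a discrete valuation ring, and pushing that valuation through $p$-power roots gives a normalized value map $v$ on $R^{\infty}$. An element of $R$ that becomes a unit in the integral extension $R^{\infty}$ is already a unit of $R$, so every element of $\fm$ is a non-unit of $R^{\infty}$ and has positive value; hence every nonzero element of $\fm R^{\infty}$ has positive value while $v(1)=0$, and $1\notin\fm R^{\infty}$. The same computation gives $\underline{x}R^{\infty}\neq R^{\infty}$ for every system of parameters $\underline{x}$ of $R$.

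Fix a system of parameters $x_{1},\dots,x_{d}$ of $R$, set $J=(x_{1},\dots,x_{i-1})$, and take $u\in R^{\infty}$ with $ux_{i}\in JR^{\infty}$; I must show $u\in JR^{\infty}$. Choose $q=p^{e}$ large enough that $a:=u^{q}\in R$ and that $ux_{i}\in JR^{1/q}$, say $ux_{i}=\sum_{j<i}x_{j}r_{j}$ with $r_{j}\in R^{1/q}$. Raising to the $q$-th power (the Frobenius is a ring homomorphism) gives $ax_{i}^{q}=\sum_{j<i}x_{j}^{q}r_{j}^{q}\in J^{[q]}$ inside $R$. Since $x_{1}^{q},\dots,x_{d}^{q}$ is again a system of parameters and $R$ is excellent or a homomorphic image of a Gorenstein local ring, colon-capturing for tight closure applies — this is where the hypotheses on $R$ enter, and equivalently one may invoke that $R^{+}$ is balanced big Cohen--Macaulay (Hochster--Huneke) together with the containment of the plus closure in the tight closure — so $a\in(J^{[q]})^{\ast}$. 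Unwinding the definition of tight closure yields a fixed $c\in R\setminus\{0\}$ with $c\,a^{q'}\in(J^{[q]})^{[q']}=J^{[qq']}$ for all $q'\gg 0$. Extracting $Q$-th roots ($Q=qq'$) inside the perfect ring $R^{\infty}$, where the Frobenius is an automorphism carrying $JR^{\infty}$ onto $J^{[Q]}R^{\infty}$ and where $c\,a^{q'}=c\,u^{Q}$, gives $c^{1/Q}\,u\in JR^{\infty}$ for all $Q$ in a cofinal set of powers of $p$.

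It remains to clear the multiplier $c^{1/Q}$, and this is exactly where F-coherence of $R$ is needed. The colon ideal $\fb:=(JR^{\infty}:_{R^{\infty}}u)$ contains $c^{1/Q}$ for all large $Q$; since $R^{\infty}$ is coherent and $JR^{\infty}$ is finitely generated, $R^{\infty}/JR^{\infty}$ is finitely presented, so $\fb$, as the kernel of multiplication by $u$ on $R^{\infty}/JR^{\infty}$, is finitely generated, say $\fb=(y_{1},\dots,y_{m})R^{\infty}$. Were $\fb$ proper, each $y_{j}$ would be a non-unit, so $\delta:=\min_{j}v(y_{j})>0$ and every nonzero element of $\fb$ would have value $\geq\delta$; but $c^{1/Q}\in\fb\setminus\{0\}$ with $v(c^{1/Q})=v(c)/Q\to 0$, a contradiction. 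Hence $\fb=R^{\infty}$, i.e. $u\in JR^{\infty}$, and $x_{1},\dots,x_{d}$ is $R^{\infty}$-regular. I expect the genuinely delicate points to be bookkeeping — pinning down which form of colon-capturing is available under ``excellent or homomorphic image of a Gorenstein local ring'', and the compatibility of $p$-power roots with extension of ideals — while the conceptual heart is this last paragraph, where coherence of $R^{\infty}$ is precisely what turns the ``almost'' membership $c^{1/Q}u\in JR^{\infty}$ into genuine membership.
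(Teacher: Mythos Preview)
Your argument is correct and follows the same route as the paper: invoke tight-closure colon-capturing in the Noetherian base ring (the paper passes to the tower $R_n\simeq R$ and applies it there; you raise the relation to the $q$-th power to land back in $R$, which is the same move in disguise) and then use coherence of $R^{\infty}$ together with the normalized value map to kill the tight-closure multiplier. The paper packages your final paragraph as a citation of Lemma~\ref{sh1}(ii)---finitely generated ideals of $R^{\infty}$ are tightly closed, the Frobenius being bijective and hence flat on a perfect ring---and defers the Gorenstein-image case to \cite[Theorem~3.10]{Sh}, whereas you give a uniform argument for both hypotheses.
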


\begin{proof}
This is proved in \cite[Theorem 3.10]{Sh} when  $R$ is a homomorphic image of a Gorenstein ring. The argument of \cite[Theorem 3.10]{Sh} based on \emph{Almost Ring Theory}. Assume that $R$ is excellent. For each n, set $R_n:=\{x\in R^\infty|x^{p^n}\in R\}$. The assignment $x\mapsto x^{p^n}$ shows that $R\simeq R_n$, which implies that $R_n$ is excellent. We recall that over excellent domains of prime characteristic one can use the colon capturing property of tight closure theory, see e.g. \cite[Theorem 1.7.4]{HH2}. Let $\underline{x}:=x_1,\ldots,x_d$ be a system of parameters for $R$, where $d:=\dim R$ and let $r\in R^{\infty}$ be such that $rx_{i+1}=\sum_{j=1}^i r_jx_j$ for some $r_j\in R^{\infty}$. Then $r,r_j\in R_n$ for $n\gg 0$. So $r\in((x_1,\ldots, x_i)R_n :_{R_n} x_{i+1})$. Thus from Lemma \ref{sh1},
\[\begin{array}{ll}
((x_1,\ldots, x_i)R^{\infty} :_{R^{\infty}} x_{i+1})&=\bigcup_n((x_1,\ldots, x_i)R_n :_{R_n} x_{i+1})\\
&\subseteq \bigcup_n((x_1,\ldots, x_i)R_n)^\ast\\
&\subseteq ((x_1,\ldots, x_{i})R^\infty)^\ast\\
&=(x_1,\ldots, x_{i})R^\infty.
\end{array}\]
Clearly, $\fm R^{\infty}\neq R^{\infty}$. So, any system of parameters of $R$ is regular sequence on $R^{\infty}$, i.e., $R^{\infty}$ is balanced  big Cohen-Macaulay.
\end{proof}

It is to be noted that in the case of non-Noetherian rings, tight closure may behave nicely, since from the above proof we find that it exhibits the colon capturing property.

We recall the following definitions. Let $\fa$ be an ideal of a ring $A$ and $M$ be an $A$-module.  A finite sequence $\underline{x}:=x_{1},\ldots,x_{r}$ of elements of $A$ is called  $M$-sequence if $x_i$ is a nonzero-divisor on $M/(x_1,\ldots, x_{i-1})M$ for $i=1,\ldots,r$ and $M/\underline{x}M\neq0$. The classical grade  of $\fa$ on $M$, denoted by $\cgrade_A(\fa,M)$, is defined by the supremum length of maximal $M$-sequences in $\fa$. The polynomial grade of $\fa$ on $M$ is defined by $$\pgrade_A(\fa,M):=\underset{m\rightarrow\infty}{\lim}\cgrade_{A[t_1, \ldots,t_m]}(\fa A[t_1, \ldots,t_m],A[t_1,\ldots,t_m]\otimes_A M)$$

and we will use the following well-known properties of the polynomial grade.

\begin{lemma}\label{pro} (see e.g. \cite{AT}) Let $\fa$ be an ideal of a ring $A$ and $M$ an
$A$-module. Then the following statements hold.
\begin{enumerate}
\item[$\mathrm{(i)}$] If $\fa$ is finitely generated, then$$\pgrade_A(\fa,M)=\inf\{\pgrade_{A_{\fp}}(\fp A_{\fp},M_{\fp})|\fp\in \V(\fa)\cap\Supp_A M\}.$$
\item[$\mathrm{(ii)}$] Let $\Sigma$ be the family of all finitely generated  ideals $\fb\subseteq\fa$. Then$$\pgrade_A(\fa,M)=\sup\{\pgrade_A(\fb,M):\fb\in\Sigma\}.$$
\item[$\mathrm{(iii)}$] $\pgrade_A(\fa,M)\leq\Ht_{M}(\fa).$
\end{enumerate}
\end{lemma}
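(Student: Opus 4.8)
The plan is to derive all three parts from the coincidence of polynomial grade with Koszul grade on finitely generated ideals, which I use freely (see \cite{AT}): for $\fa = (x_1,\dots,x_n)$ one has $\pgrade_A(\fa,M) = \Kgrade_A(\fa,M) = \inf\{\, i : H^i(x_1,\dots,x_n;M) \neq 0 \,\}$, where $H^\bullet(x_1,\dots,x_n;M)$ is the Koszul cohomology; this quantity depends on $\fa$ only through $\sqrt{\fa}$, it is unchanged when $A$ is replaced by a polynomial ring over it, and it can only grow under localization, because Koszul cohomology commutes with localization, $H^i(\underline{x};M)_\fp \cong H^i(\underline{x};M_\fp)$, while each $H^i(\underline{x};M)$ is annihilated by $\fa$. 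I also use the elementary fact that, for an \emph{arbitrary} ideal $\fb$, $\cgrade_A(\fb,M) = \sup\{\, \cgrade_A(\fc,M) : \fc \subseteq \fb \text{ finitely generated}\,\}$: every $M$-sequence in $\fb$ has finitely many terms, hence lies in a finitely generated subideal, while conversely an $M$-sequence in such a subideal is one in $\fb$.

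For (ii), apply this last fact over the polynomial ring $B := A[t_1,\dots,t_m]$ with $M_B := M \otimes_A B$. A finitely generated subideal of $\fa B$ is generated by finitely many polynomials whose (finitely many) coefficients lie in $\fa$ and generate some $\fb \in \Sigma$, and the subideal is then contained in $\fb B$; hence $\cgrade_B(\fa B, M_B) = \sup_{\fb \in \Sigma} \cgrade_B(\fb B, M_B)$ for every $m$. Both sides are nondecreasing in $m$, so passing to $m \to \infty$ turns each side into a supremum over $m$ and merely interchanges two suprema, which yields (ii).

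For (i), put $g = \pgrade_A(\fa,M)$ with $\fa = (\underline{x})$, so that $g = \inf\{\, i : H^i(\underline{x};M) \neq 0 \,\}$. Since each $H^i(\underline{x};M)$ is annihilated by $\fa$ and localizes, $H^i(\underline{x};M) \neq 0$ holds exactly when $H^i(\underline{x};M_\fp) \neq 0$ for some $\fp \in \V(\fa) \cap \Supp_A M$; interchanging the infimum over $i$ with the infimum over such $\fp$ gives $g = \inf\{\, \Kgrade_{A_\fp}(\fa A_\fp, M_\fp) : \fp \in \V(\fa) \cap \Supp_A M \,\}$. As $\fa A_\fp \subseteq \fp A_\fp$ and $\pgrade$ is monotone in the ideal, this already gives $\pgrade_A(\fa,M) \leq \inf\{\, \pgrade_{A_\fp}(\fp A_\fp, M_\fp) : \fp \in \V(\fa) \cap \Supp_A M \,\}$. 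For the reverse inequality one must exhibit a prime $\fp$ in this index set with $\pgrade_{A_\fp}(\fp A_\fp,M_\fp) = g$. When $g = 0$ this is concrete: choosing $0 \neq m \in M$ with $\fa m = 0$ and a prime $\fp$ minimal over $\Ann_A(m)$, one has $m/1 \neq 0$ in $M_\fp$, the ideal $\Ann_A(m) A_\fp$ has radical $\fp A_\fp$ and annihilates the nonzero element $m/1$, and since $\pgrade$ (being built from Koszul grade) depends only on radicals, $\pgrade_{A_\fp}(\fp A_\fp, M_\fp) = \pgrade_{A_\fp}(\Ann_A(m)A_\fp, M_\fp) = 0$. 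The general case is handled by an analysis of the nonvanishing locus of the module $H^g(\underline{x};M)$ in the same spirit, which is where I would invoke \cite{AT}.

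For (iii), combine (i) with the fact that $\Ht_M(\fa)$ is by definition the infimum of $\dim A_\fp/\Ann_{A_\fp}(M_\fp)$ over $\fp \in \V(\fa) \cap \Supp_A M$; it then suffices to prove, for a local ring $(R,\fm)$ and a nonzero $R$-module $N$, the inequality $\pgrade_R(\fm,N) \leq \dim_R N$. Writing $\pgrade_R(\fm,N) = \sup\{\, \Kgrade_R(\fc,N) : \fc \subseteq \fm \text{ finitely generated}\,\}$ as in the argument for (ii), one reduces for each finitely generated $\fc$, by localizing at a prime that realizes the lowest nonvanishing Koszul cohomology of a generating set of $\fc$ on $N$, to bounding the Koszul grade of a finitely generated ideal on a module over a local ring by the Krull dimension of that module, and this follows by induction on the dimension, using that an $N$-regular element — which may only become available after adjoining indeterminates — strictly lowers the Krull dimension. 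The step I expect to be the main obstacle is the reverse inequality in (i) for general $g$: without the Noetherian hypothesis one has neither primary decomposition nor closedness of $\Supp_A H^g(\underline{x};M)$, so locating a prime above $\fa$ that is minimal in the relevant sense and sits in $\Supp_A M$ — or working around its nonexistence — is precisely the delicate non-Noetherian bookkeeping settled in \cite{AT}.
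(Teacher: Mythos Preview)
The paper gives no proof of this lemma at all: it is stated with the parenthetical ``(see e.g.\ \cite{AT})'' and nothing more. Your proposal therefore goes well beyond what the paper offers, and for (ii) and the easy inequality in (i) your Koszul-cohomology arguments are correct and standard. At the point where the lemma has real content, however --- the reverse inequality in (i) for general $g$ --- you explicitly defer to \cite{AT}, so your approach and the paper's ultimately coincide.

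Two genuine issues in your sketch of (iii). First, you open by invoking (i) on $\fa$, but (i) is stated only for finitely generated $\fa$; you should reduce via (ii) before localizing. Second, and more seriously, your induction rests on the claim that an $N$-regular element ``strictly lowers the Krull dimension'', possibly after adjoining indeterminates. Over Noetherian rings this holds because every minimal prime of $\Supp N$ is associated to $N$, hence avoided by a nonzerodivisor; without the Noetherian hypothesis a nonzerodivisor on $N$ can lie in a minimal prime of $\Supp N$, and the dimension need not drop. Moreover, if the regular element is only available in $B=R[t_1,\dots,t_m]$, the dimension that your induction actually decreases is $\dim_B N_B$, not $\dim_R N$, and these are not the same. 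This is precisely the kind of non-Noetherian bookkeeping that \cite{AT} treats carefully, so your final remark about where the difficulty lies is well placed --- it just applies to (iii) as much as to (i).
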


Now, we are ready to prove the following:

\begin{theorem}\label{main}
Let $R$ be a  Noetherian local F-coherent domain which is either excellent or  homomorphic image of a Gorenstein local ring. Then  $\Ht(\fa)=\pgrade_{R^{\infty}}(\fa, R^{\infty})$ for all  ideal $\fa$ of $R^{\infty}$.
\end{theorem}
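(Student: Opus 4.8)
The plan is to deduce the equality from Lemma \ref{sh} (which tells us $R^\infty$ is balanced big Cohen-Macaulay) together with the formal properties of polynomial grade recorded in Lemma \ref{pro}. Let me think through the key reductions.

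First, by Lemma \ref{pro}(iii) we already have $\pgrade_{R^\infty}(\fa, R^\infty) \le \Ht_{R^\infty}(\fa)$, and since $R^\infty$ is an integral extension of the Noetherian domain $R$ with the same fraction field, heights in $R^\infty$ agree with heights in $R$ of contracted ideals; so it suffices to produce the reverse inequality $\pgrade_{R^\infty}(\fa, R^\infty) \ge \Ht(\fa)$. Next, by Lemma \ref{pro}(ii), polynomial grade is the supremum over finitely generated subideals, and height is likewise approximated: given $\fa$ with $\Ht(\fa) = h$, I can find a finitely generated subideal $\fb \subseteq \fa$ with $\Ht(\fb) = h$ (indeed by choosing generators hitting all the relevant minimal primes, or by an element-by-element argument using that $\Ht(\fb) \ge \grade$ considerations over the Noetherian base). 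So the problem reduces to: for a finitely generated ideal $\fb$ of $R^\infty$ with $\Ht(\fb) = h$, show $\pgrade_{R^\infty}(\fb, R^\infty) \ge h$.

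For a finitely generated ideal $\fb$, Lemma \ref{pro}(i) lets me localize: $\pgrade_{R^\infty}(\fb, R^\infty) = \inf\{\pgrade_{(R^\infty)_\fp}(\fp (R^\infty)_\fp, (R^\infty)_\fp) : \fp \in \V(\fb)\}$. Pick a minimal prime $\fp$ over $\fb$ achieving the height, so $\Ht(\fp) \ge h$. Now I want to say that $(R^\infty)_\fp$ still "sees" a long regular sequence. The key point is that $\fb \supseteq$ (up to radical) can be generated up to radical by a partial system of parameters: since $\Ht(\fp)=\Ht(\fp\cap R)\ge h$ in the Noetherian domain $R$, I can choose $x_1,\dots,x_h \in \fp \cap R$ forming part of a system of parameters for $R_{\fp\cap R}$, hence for $R$ after adjusting. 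By Lemma \ref{sh}, $x_1,\dots,x_h$ is a regular sequence on $R^\infty$, and this property localizes (a regular sequence contained in $\fp$ stays regular after localizing at $\fp$, since $R^\infty/(x_1,\dots,x_h)R^\infty$ is nonzero locally at $\fp$ as $\fp \supseteq \fb \supseteq \rad(x_1,\dots,x_h)$ — here I use that $R^\infty/\fp R^\infty \ne 0$). Therefore $\cgrade_{(R^\infty)_\fp}(\fp(R^\infty)_\fp, (R^\infty)_\fp) \ge h$, and since $\pgrade \ge \cgrade$ in general (the polynomial ring extension only increases the length of maximal sequences, or at worst the limit stabilizes at something $\ge \cgrade$), we get $\pgrade_{(R^\infty)_\fp}(\fp(R^\infty)_\fp, (R^\infty)_\fp) \ge h$ for every such $\fp$. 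Taking the infimum over $\fp \in \V(\fb)$ — all of which have height $\ge h$ — yields $\pgrade_{R^\infty}(\fb, R^\infty) \ge h = \Ht(\fb)$, and then passing to the supremum over finitely generated $\fb \subseteq \fa$ gives $\pgrade_{R^\infty}(\fa, R^\infty) \ge \Ht(\fa)$.

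I expect the main obstacle to be the bookkeeping around heights and the descent of the regular sequence to the localization: one must check carefully that for every prime $\fp$ over the chosen finitely generated subideal $\fb$, the height $\Ht(\fp\cap R)$ in $R$ is at least $\Ht(\fb)$, and that the partial system of parameters one selects genuinely lands in $\fp$ and remains a nonzerodivisor sequence after localizing (the nonvanishing of the relevant quotient module localized at $\fp$ is the delicate point, relying on $\fp$ being a prime of $R^\infty$ lying over $\fp\cap R \supseteq (x_1,\dots,x_h)$). The flatness/coherence input from Lemma \ref{sh1} and the colon-capturing already packaged inside Lemma \ref{sh} do all the heavy lifting; everything after that is a grade-theoretic exercise, but the localization step is where care is required since $R^\infty$ is non-Noetherian and one cannot invoke the usual Noetherian grade-height identities directly — that is precisely why Lemma \ref{pro}(i) is invoked to reduce to the local situation.
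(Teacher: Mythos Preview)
Your overall strategy is correct and closely parallels the paper's: reduce to finitely generated ideals via Lemma~\ref{pro}(ii), localize via Lemma~\ref{pro}(i), and exploit that partial systems of parameters become regular sequences on $R^\infty$ via Lemma~\ref{sh}. The paper differs from you at the localization step: rather than localizing a regular sequence directly, it establishes an explicit isomorphism $(R^\infty)_P \cong (R_{\fp})^\infty$ (where $\fp = P\cap R$) and then invokes the maximal-ideal case for $R_\fp$, which is again F-coherent and excellent (or a quotient of a Gorenstein ring). Your alternative---localizing the regular sequence $x_1,\dots,x_h$ at $P$---is legitimate and slightly more elementary, avoiding the need to verify that the hypotheses transfer to $R_\fp$.

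Where you are too casual is the step $\Ht(\fa)=\sup\{\Ht(\fb):\fb\subseteq\fa\text{ finitely generated}\}$. You assert this by ``choosing generators hitting all the relevant minimal primes,'' but this is exactly the nontrivial Claim the paper isolates and proves, using catenarity of $R$ together with \cite[Theorem A.2]{BH} to extract $x_1,\dots,x_n\in\fa\cap R$ with $\Ht(x_1,\dots,x_i)R=i$; Remark~\ref{rem}(i) explicitly flags that this height-approximation property is open for general rings. Relatedly, your passage ``forming part of a system of parameters for $R_{\fp\cap R}$, hence for $R$ after adjusting'' also requires catenarity (a system of parameters for a localization need not extend to one for $R$ otherwise). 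Since $R$ is excellent or a homomorphic image of a Gorenstein local ring, it \emph{is} catenary, so both steps go through---but you should invoke this explicitly rather than leave it buried in ``adjusting.''
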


\begin{proof}  Let $\underline{x}:=x_1,\ldots,x_d$ be a system of parameters for $R$, where $d:=\dim R$. By Lemma \ref{sh}, $\underline{x}$ is regular sequence on $R^{\infty}$. So $$d\leq\pgrade(\underline{x}R^{\infty},R^{\infty})\leq\pgrade(\fm_{R^{\infty}},R^{\infty})\leq\Ht(\fm_{R^{\infty}})=d.$$This shows that $\pgrade(\fm_{R^{\infty}},R^{\infty})=\Ht(\fm_{R^{\infty}})$  for the maximal ideal $\fm_{R^{\infty}}$ of $R^{\infty}$.
At first, we assume that $\fa$ is  finitely generated and let  $P$ be a prime ideal of $R^{\infty}$ such that $P\supseteq \fa$. Set $\fp:=P\cap R$.
Take $x\in(R_{\fp})^{\infty}$. Then $x^{p^n}\in R_{\fp}$ for some $n$, where $p$ is the characteristic of $R$.  Thus $x^{p^n}=a/b$ for some $a\in R$ and $b \in R\setminus\fp$. Consider $\frac{a^{1/p^n}}{b^{1/p^n}}$ as an element of $R^{\infty}_P$. Let $m\geq n$. Then  $\frac{a^{1/p^n}}{b^{1/p^n}}=(\frac{a^{1/p^{m-n}}}{b^{1/p^{m-n}}})^{1/p^m}$. Keep it in the mind that $R$ is of prime characteristic. If we assume that $a/b=c/d$ then $\frac{a^{1/p^n}}{b^{1/p^n}}=\frac{c^{1/p^n}}{d^{1/p^n}}$. Putting these facts together, we see that the assignment $x\mapsto \frac{a^{1/p^n}}{b^{1/p^n}}$ is independent of the presentation of $x$ and of the choice of $n$. So, it defines a ring homomorphism  $\varphi: (R_{\fp})^{\infty} \to R^{\infty}_P$. Clearly, $\ker \varphi =0$. Let $x\in R^{\infty}_P$. Then $x=\alpha /\beta$, where $\alpha\in R^{\infty}$ and $\beta\in R^{\infty}\setminus P$. Take $n$ to be such that
$a:=\alpha^{p^n}\in R$ and $b:=\beta^{p^n}\in R\setminus \fp$. Set $c:=a/b\in R_{\fp}$.  Consider $\gamma:=c^{1/p^n}\in (R_{\fp})^{\infty}$. Then  $\varphi(\gamma)=x$, and thus $\varphi$ is an isomorphism.

We recall that $R_{\fp}$ is F-coherent.  Clearly,  $R_{\fp}$ is either excellent or  homomorphic image of a Gorenstein local ring. By the case of maximal ideals,  $$\pgrade(PR^{\infty}_P,R^{\infty}_P)=\pgrade(\fm_{(R_{\fp})^{\infty}},(R_{\fp})^{\infty})=\Ht(\fp)=\Ht(P).$$ This fact along with Lemma \ref{pro} yields that
\[\begin{array}{ll}
\pgrade(\fa,R^{\infty})&=\inf\{\pgrade(PR^{\infty}_P,R^{\infty}_P)|P\in \V(\fa)\}\\
&=\inf\{\Ht(P)|P\in \V(\fa)\}\\&=\Ht(\fa),
\end{array}\]
which  shows that $\pgrade(\fa,R^{\infty})=\Ht(\fa)$ for every finitely generated ideal $\fa$ of $R^{\infty}$.

Finally we assume that $\fa$ is an arbitrary ideal of $R^{\infty}$ and let $\Sigma$ be the family of all finitely generated ideals $\fb\subseteq\fa$. We  bring the following claim:

 Claim: One has $\Ht(\fa)=\sup\{\Ht(\fb):\fb\in \Sigma\}$.

To see this, let $P\in \Spec(R^{\infty})$ be such that $\Ht(P)=\Ht(\fa)$ and set $\fp:=P\cap R$ and we already have that  $R^{\infty}_P=(R_{\fp})^{\infty}$. Set $n:=\Ht(\fp)=\Ht(P)=\Ht(\fa \cap R)$. Due to \cite[Theorem A.2]{BH}, there exists a sequence $\underline{x}:=x_1,\ldots,x_n$ of elements of $\fa\cap R$ such that $\Ht(x_1,\ldots,x_i)R=i$ for all $1\leq i \leq n$.  Since $R$ is catenary, $\underline{x}$ is part of a system of parameters for $R$. By Lemma \ref{sh}, $\underline{x}$ is a regular sequence on $R^{\infty}$. So $$\Ht(P)\geq \Ht(\underline{x}R^{\infty})\geq\pgrade(\underline{x}R^{\infty},R^{\infty})=n= \Ht(\fp)=\Ht(P),$$
which shows that $\Ht(\underline{x}R^{\infty})=\Ht(\fa)$. This completes the proof of the claim.

From the results of  finitely generated ideals, we have that $\pgrade(\fb,R^{\infty})=\Ht(\fb)$ for all $\fb\in\Sigma$. Thus from Lemma \ref{pro} and from the above claim we see that
\[\begin{array}{ll}
\pgrade(\fa,R^{\infty})&=\sup\{\pgrade(\fb,R^{\infty}):\fb\in\Sigma\}\\
&=\sup\{\Ht(\fb):\fb\in \Sigma\}\\&=\Ht(\fa),\\
\end{array}\]
and this is precisely what we wish to prove.
\end{proof}

Let $M$ be an $A$-module. Recall that a prime ideal $\fp$ is weakly associated to $M$ if $\fp$ is minimal over $(0 :_{A} m)$ for some $m\in M $.

\begin{corollary}
Adopt the assumption of Theorem \ref{main} and let $\fa$ be an ideal of $R^{\infty}$ which is generated by
$\Ht(\fa)$ elements. Then all the weakly associated prime ideals of $R^{\infty}/ \fa$ have the same height.
\end{corollary}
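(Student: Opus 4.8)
The plan is to combine Theorem \ref{main} with the standard fact that polynomial grade computed via the $\pgrade = \Ht$ identity forces the unmixedness of ideals generated by a number of elements equal to their height. First I would set $h := \Ht(\fa)$ and write $\fa = (x_1,\dots,x_h)R^\infty$. Let $\fp$ be any prime weakly associated to $R^\infty/\fa$, so that $\fp$ is minimal over $(\fa :_{R^\infty} m)$ for some $m \in R^\infty/\fa$; in particular $\fp \in \V(\fa)$, hence $\Ht(\fp) \ge h$ by Theorem \ref{main} together with Lemma \ref{pro}(iii) (or directly since $\Ht$ is monotone on $\V(\fa)$). The content of the corollary is therefore the reverse inequality $\Ht(\fp) \le h$ for every such $\fp$.

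The key step is to pass to the localization $R^\infty_\fp$. As established inside the proof of Theorem \ref{main}, one has $R^\infty_\fp \cong (R_\fq)^\infty$ where $\fq := \fp \cap R$, and $R_\fq$ is again a Noetherian local F-coherent domain which is excellent (resp. a homomorphic image of a Gorenstein local ring). Thus Theorem \ref{main} applies to $R^\infty_\fp$, giving $\pgrade_{R^\infty_\fp}(\fp R^\infty_\fp, R^\infty_\fp) = \Ht(\fp R^\infty_\fp) = \Ht(\fp)$. On the other hand, $\fp R^\infty_\fp$ is minimal over $(\fa R^\infty_\fp :_{R^\infty_\fp} m/1)$, so after localizing $\fp R^\infty_\fp$ consists of zerodivisors on $R^\infty_\fp/\fa R^\infty_\fp$ — more precisely, $\fp R^\infty_\fp = \sqrt{(\fa R^\infty_\fp :_{R^\infty_\fp} m/1)}$, so every element of $\fp R^\infty_\fp$ is a zerodivisor on $R^\infty_\fp/\fa R^\infty_\fp$ via the element $m/1$. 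Hence no element of $\fp R^\infty_\fp$ is a nonzerodivisor on $R^\infty_\fp/\fa R^\infty_\fp$, which gives $\cgrade_{R^\infty_\fp}(\fp R^\infty_\fp, R^\infty_\fp/\fa R^\infty_\fp) = 0$, and the same for polynomial grade after adjoining indeterminates (since a weakly associated prime persists under polynomial extension). Feeding the short exact sequence $0 \to \fa R^\infty_\fp \to R^\infty_\fp \to R^\infty_\fp/\fa R^\infty_\fp \to 0$ into the standard grade inequality $\pgrade_{R^\infty_\fp}(\fp R^\infty_\fp, R^\infty_\fp) \le \pgrade_{R^\infty_\fp}(\fp R^\infty_\fp, \fa R^\infty_\fp) \le \cgrade_{R^\infty_\fp}(\fp R^\infty_\fp, R^\infty_\fp/\fa R^\infty_\fp) + 1$ — note that $\fa R^\infty_\fp$ is a homomorphic image of $(R^\infty_\fp)^h$ generated by $h$ elements — yields $\Ht(\fp) = \pgrade_{R^\infty_\fp}(\fp R^\infty_\fp, R^\infty_\fp) \le h$. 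Combined with the first paragraph, $\Ht(\fp) = h$ for every weakly associated prime $\fp$, as claimed.

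The main obstacle I anticipate is making the grade bookkeeping in the non-Noetherian setting precise: ordinary associated primes need not exist, so one must work throughout with weakly associated primes and with polynomial (rather than classical) grade, and verify that the elementary inequalities relating $\pgrade$ of the three terms in a short exact sequence — and the fact that a weakly associated prime of $M$ remains weakly associated after $-\otimes_A A[t_1,\dots,t_m]$, so that $\pgrade_A(\fp, M) = 0$ — hold without Noetherian hypotheses. These are available in the references (e.g. \cite{AT}) on polynomial grade; once they are in place the argument is the one sketched above. An alternative, perhaps cleaner, route is to bound directly: since $\fa$ is generated by $h = \Ht(\fa)$ elements, Lemma \ref{pro}(iii) gives $\pgrade_{R^\infty}(\fa, R^\infty) \le h$, while Theorem \ref{main} gives equality, so $x_1,\dots,x_h$ is (after a permutation, using Lemma \ref{pro}(i) locally at $\fp$) a maximal $R^\infty_\fp$-sequence in $\fp R^\infty_\fp$ of length $\Ht(\fp)$; comparing with the length $h$ forces $\Ht(\fp) = h$.
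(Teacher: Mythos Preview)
The paper does not actually prove this corollary: it simply cites \cite[Corollary 4.6]{AT}, where the unmixedness statement is established for any ring satisfying $\pgrade=\Ht$, and observes that Theorem~\ref{main} puts $R^\infty$ in that class. Your proposal is therefore an attempt to reconstruct that external argument, and the overall strategy---localize at a weakly associated prime $\fp$, use Theorem~\ref{main} to get $\pgrade(\fp R^\infty_\fp,R^\infty_\fp)=\Ht(\fp)$, and then bound this grade by $h$ using that $\fp$ consists of zerodivisors modulo $\fa$---is exactly the right one.

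That said, neither of your two routes is correct as written. In the first route, the displayed chain
\[
\pgrade(\fp R^\infty_\fp,R^\infty_\fp)\le\pgrade(\fp R^\infty_\fp,\fa R^\infty_\fp)\le\cgrade(\fp R^\infty_\fp,R^\infty_\fp/\fa R^\infty_\fp)+1
\]
is not a valid grade inequality for short exact sequences; grade on a submodule is not in general bounded below by grade on the ambient module, and the ``$+1$'' does not produce the bound $h$. In the alternative route, you assert that $x_1,\dots,x_h$ is a classical regular sequence on $R^\infty_\fp$ (after permutation). In a non-Noetherian local ring, $\pgrade(\fa,A)=h$ with $\fa$ $h$-generated does not by itself give a classical regular sequence; it gives $\Kgrade(\fa,A)=h$, i.e.\ acyclicity of the Koszul complex $K(\underline{x};A)$, which is strictly weaker.

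The clean fix, and essentially what \cite{AT} does, is to use that acyclicity directly: since $K(\underline{x};R^\infty_\fp)$ is a free resolution of $R^\infty_\fp/\fa R^\infty_\fp$, for any finitely generated $\fb$ with $\fa R^\infty_\fp\subseteq\fb\subseteq\fp R^\infty_\fp$ one gets the additivity
\[
\Kgrade(\fb,R^\infty_\fp)=h+\Kgrade(\fb,R^\infty_\fp/\fa R^\infty_\fp)
\]
by comparing $K(\underline{y},\underline{x};R^\infty_\fp)$ with $K(\underline{y};R^\infty_\fp/\fa R^\infty_\fp)$. Since $\fp$ is weakly associated to $R^\infty/\fa$, the right-hand grade is $0$, whence $\pgrade(\fp R^\infty_\fp,R^\infty_\fp)\le h$ by Lemma~\ref{pro}(ii), and you conclude $\Ht(\fp)=h$ as desired.
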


\begin{proof}
See \cite[Corollary 4.6]{AT} and its proof.
\end{proof}

\begin{remark}\label{rem}
$\mathrm{(i)}$: Concerning the proof of Theorem \ref{main}, we can ask the following question which has a positive answer for several classes of commutative rings such as valuation domains. Let $A$ be a commutative ring and $\fa$  an  ideal of $A$. Let $\Sigma$ be the family of all finitely generated ideals $\fb\subseteq\fa$. Is $\Ht(\fa)=\sup\{\Ht(\fb):\fb\in \Sigma\}$?

$\mathrm{(ii)}$: Let A be a ring with the property that $\pgrade_A(\fm;A) = \Ht(\fm)$ for all maximal
ideals $\fm$ of $A$. One may ask whether $\pgrade_A(\fa;A) = \Ht(\fa)$ holds for all ideals
$\fa$ of $A$. In view of \cite[Example 3.11]{AT}, this is not true in general.
\end{remark}

\section{Dagger Closure and Big Cohen-Macaulay algebras}

 In this section we extend the notion of dagger closure to the submodules of a finitely generated module over a Noetherian local domain $(R,\fm)$ and we present Corollary \ref{main2}.

\begin{definition}\label{def:almost closure of an ideal}
Let $A$ be a local algebra with a normalized valuation $v:A\rightarrow \RR\cup\{\infty\}$ and $M$ be an $A$-module. Consider a submodule $N\subset M$. We say $x\in N^{v}_M$ if for every $\epsilon> 0$, there exists $a\in A$ such that $v(a)< \epsilon$ and $ax\in N$.
\end{definition}

\begin{definition}\label{almcm}
Let $A$ be an algebra over a Noetherian local domain $(R,\fm)$ of $\dim d$. Assume that $A$ is equipped  with a normalized value map $v : A\to \RR\cup\{\infty\}$. From \cite{RSS}, we recall that $A$ is called almost Cohen-Macaulay if for $i=1,\ldots, d$, each element of $((x_{1},\ldots, x_{i-1})A:_{A}x_{i})/(x_{1},\ldots,x_{i-1})A$ is annihilated by elements of sufficiently small order with respect to $v$ for all system of parameters $x_1,\ldots, x_d$ of $A$.
\end{definition}

\begin{definition}\label{def}
Let $(R,\fm)$ be a Noetherian local domain and let $A$ be a local domain containing $R$ with a normalized valuation $v:A\rightarrow \RR\cup\{\infty\}$. For any finitely generated $R$-module $M$ and for its submodule $N$ we define submodule $N_{M}^{\bold{v}}$ such that $x\in N_{M}^{\bold{v}}$ if $x\otimes 1 \in \im(N\otimes A\to M\otimes A)_{M\otimes A}^{v}$.
\end{definition}

\begin{remark} Let $A$ be a perfect domain and $\frak a$ be a nonzero radical ideal of $A$. Then  $\frak a^v=A$. To see this, let $x$ be a  nonzero element of $\frak a$. Since $v(x^{1/n})=v(x)/n$ and $x^{1/n}\in \frak a$, the conclusion follows.
\end{remark}

\begin{proposition}\label{pr}
Let $(R,\fm)$ be a Noetherian local domain and let $A$ be a local domain containing $R$ with a normalized valuation $v:A\rightarrow \RR\cup\{\infty\}$. Let $M$, $M'$ be finitely generated $R$-modules. Consider the submodules $N$, $W$ of $M$. Then the following statements are true:\begin{enumerate}
\item[$\mathrm{(i)}$]  $N^{\bold{v}}_M$ is a submodule of $M$ containing $N$.
\item[$\mathrm{(ii)}$]$(N^{\bold{v}}_M)^{\bold{v}}_M= N^{\bold{v}}_M$.
\item[$\mathrm{(iii)}$]If $N\subset W\subset M$, then $N^{\bold{v}}_M\subset W^{\bold{v}}_M$.
\item[$\mathrm{(iv)}$] Let $f:M\to M'$ be a homomorphism. Then $f(N^{\bold{v}}_M)\subset f(N)^{\bold{v}}_{M'}$.
\item[$\mathrm{(v)}$]
If $N^{\bold{v}}_M= N$, then $0^{\bold{v}}_{M/N}= 0$.
\item[$\mathrm{(vi)}$]
We have $0^\bold{v}_R=0$ and $\fm^\bold{v}_R=\fm$.\end{enumerate}
In addition to, if $A$ is almost Cohen-Macaulay, then the following is true:
\begin{enumerate}
\item[$\mathrm{(vii)}$] Let $x_1, \ldots , x_{k+1}$ be a partial system of parameters for $R$, and let $J =
(x_1, \ldots , x_k)R$. Suppose that there exists a surjective homomorphism $f:M\to
R/J$ such that $f(u) = \bar{x}_{k+1}$, where $\bar{x}$ is the image of $x$ in $R/J$. Then
$(Ru)^{\bold{v}}_M\cap \ker f \subset (Ju)^{\bold{v}}_M$.
\end{enumerate}
\end{proposition}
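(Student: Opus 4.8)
The plan is to imitate the classical colon‑capturing argument for tight closure, but to feed in the almost Cohen‑Macaulay hypothesis on $A$ where one usually invokes colon‑capturing, while bookkeeping orders with the value map $v$. The essential content will be an application of $f\otimes\id_{A}$ to a single relation in $M\otimes_{R}A$.

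First I would unwind the two closure operations. Since $Ru$ is generated over $R$ by $u$ and $Ju=(x_{1}u,\ldots,x_{k}u)R$, extending scalars gives $\im(Ru\otimes_{R}A\to M\otimes_{R}A)=A\cdot(u\otimes 1)$ and $\im(Ju\otimes_{R}A\to M\otimes_{R}A)=(JA)\cdot(u\otimes 1)$ inside $M\otimes_{R}A$. Combining Definitions~\ref{def:almost closure of an ideal} and~\ref{def}, this means: an element $x\in M$ lies in $(Ru)^{\mathbf v}_{M}$ exactly when for every $\epsilon>0$ there are $a,b\in A$ with $v(a)<\epsilon$ and $a(x\otimes 1)=b(u\otimes 1)$; and $x$ lies in $(Ju)^{\mathbf v}_{M}$ exactly when for every $\epsilon>0$ one can moreover take $b\in JA$.

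Then I would fix $x\in (Ru)^{\mathbf v}_{M}\cap\ker f$ and $\epsilon>0$, and use the first description to pick $a_{1}\in A$ with $v(a_{1})<\epsilon/2$ together with $b\in A$ satisfying $a_{1}(x\otimes 1)=b(u\otimes 1)$. The crucial step is to show that $b$ is ``almost'' in $JA$. For this, apply the $A$‑linear map $g:M\otimes_{R}A\to (R/J)\otimes_{R}A\cong A/JA$ given by $f\otimes\id_{A}$ followed by the canonical isomorphism: one has $g(x\otimes 1)=0$ since $x\in\ker f$, whereas $g(u\otimes 1)$ is the class of (the image in $A$ of) $x_{k+1}$, since $f(u)=\bar x_{k+1}$. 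Applying $g$ to the relation $a_{1}(x\otimes 1)=b(u\otimes 1)$ gives $bx_{k+1}\in JA$, i.e. $b\in\bigl((x_{1},\ldots,x_{k})A:_{A}x_{k+1}\bigr)$. Extend $x_{1},\ldots,x_{k+1}$ to a full system of parameters $x_{1},\ldots,x_{d}$ of $R$; since $A$ is almost Cohen‑Macaulay, Definition~\ref{almcm} applied to this system of parameters shows the class of $b$ in $\bigl((x_{1},\ldots,x_{k})A:_{A}x_{k+1}\bigr)/(x_{1},\ldots,x_{k})A$ is annihilated by some $a_{2}\in A$ with $v(a_{2})<\epsilon/2$; equivalently $a_{2}b\in (x_{1},\ldots,x_{k})A=JA$.

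Finally I would set $a:=a_{2}a_{1}$: then $v(a)=v(a_{1})+v(a_{2})<\epsilon$ and $a(x\otimes 1)=(a_{2}b)(u\otimes 1)\in (JA)(u\otimes 1)=\im(Ju\otimes_{R}A\to M\otimes_{R}A)$, so since $\epsilon>0$ was arbitrary the second description certifies $x\in (Ju)^{\mathbf v}_{M}$. I do not anticipate a genuine obstacle: the only points needing care are the bookkeeping between the outer closure (taken over $R$ after tensoring up to $A$) and the inner closure (taken over $A$), and the remark that the universal quantifier over systems of parameters in Definition~\ref{almcm} does apply to the chosen extension of $x_{1},\ldots,x_{k+1}$; the numerical part is trivial because $v$ is additive on products, so splitting $\epsilon$ in half is all that is required.
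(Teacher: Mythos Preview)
Your argument for part~$\mathrm{(vii)}$ is correct and is exactly the expected colon-capturing verification: tensor the relation up to $A$, push it through $f\otimes\id_A$ to land in $A/JA$, read off $b\in (JA:_A x_{k+1})$, and then invoke the almost Cohen--Macaulay hypothesis to kill $b$ modulo $JA$ at the cost of a second small-order multiplier. The paper itself gives no proof at all (``straightforward and left to the reader''), so you have not taken a different route---you have simply supplied the details the paper omits. You address only $\mathrm{(vii)}$, but parts $\mathrm{(i)}$--$\mathrm{(vi)}$ are indeed routine unwindings of Definitions~\ref{def:almost closure of an ideal} and~\ref{def} (for $\mathrm{(vi)}$ one uses that $A$ is a domain, so a nonzero $a$ with $v(a)<\epsilon$ cannot annihilate a nonzero element, and that $\fm$ is finitely generated so elements of $\fm A$ have valuation bounded below), so omitting them is not a gap.
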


\begin{proof}
The proof is straightforward and we leave it to the reader.
\end{proof}

In \cite{Ho}, Hochster showed the existence of a big Cohen-Macaulay algebra from the existence of an almost Cohen-Macaulay algebra in dimension 3. The following Corollary extends this result, by proving the existence of big Cohen-Macaulay modules from the existence of almost Cohen-Macaulay algebras in any dimension $d$.

\begin{corollary}\label{main2}
For a complete Noetherian local domain, if it is contained in an almost Cohen-Macaulay domain, then there exists a balanced big Cohen-Macaulay module over it.
\end{corollary}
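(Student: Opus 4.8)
The plan is to follow the classical construction of balanced big Cohen-Macaulay modules from a module-modification / partial algebra argument (in the style of Hochster's work and of \cite{Ho}), but carried out inside the dagger-closure operation $N\mapsto N^{\bold v}_M$ developed in this section, which plays exactly the role that tight closure plays in the equicharacteristic theory. Write $(R,\fm)$ for the complete Noetherian local domain of dimension $d$ and let $A\supseteq R$ be an almost Cohen-Macaulay local domain with normalized valuation $v$. The heart of the matter is the ``colon-capturing up to small order'' statement, which is precisely Proposition~\ref{pr}(vii): if $x_1,\dots,x_{k+1}$ is a partial system of parameters, $J=(x_1,\dots,x_k)R$, and $f:M\twoheadrightarrow R/J$ sends $u\mapsto \bar x_{k+1}$, then $(Ru)^{\bold v}_M\cap\Ker f\subseteq (Ju)^{\bold v}_M$. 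This is the dagger-closure analogue of the fact that a relation $r x_{k+1}\in (x_1,\dots,x_k)$ forces $r$ into the closure of $(x_1,\dots,x_k)$, and it is what lets one perform module modifications without destroying properness.

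First I would set up the module-modification machinery: starting from $M_0=R$, given a module $M_t$ together with a bad relation $r x_{k+1}=\sum_{j\le k} r_j x_j$ in $M_t$ (with $x_1,\dots,x_{k+1}$ part of an s.o.p.\ for $R$), one adjoins a free generator to kill it, forming $M_{t+1}=(M_t\oplus Re)/R(r e - \sum(\text{lifts}))$, and passes to a (possibly transfinite) direct limit $M_\infty=\varinjlim M_t$. The standard point is that $M_\infty$ has no bad relations on any s.o.p., so $x_1,\dots,x_d$ is a regular sequence on $M_\infty$; and balancedness (the sequence is regular in every order, equivalently for every s.o.p.) follows because $R$ is local and we may arrange the modifications symmetrically, or by invoking the usual fact that over a Noetherian local ring a big Cohen-Macaulay module can be replaced by a balanced one. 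The \textbf{one thing that must be checked} — and this is where the hypothesis that $A$ is almost Cohen-Macaulay is consumed — is that $\fm M_\infty \ne M_\infty$, i.e.\ properness is not lost in the limit.

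The key estimate is that each module modification $M_t\to M_{t+1}$ does not enlarge the dagger closure of $0$ relative to the relevant quotient: concretely, one shows by induction on $t$ that $0^{\bold v}_{M_t/\fm M_t}=0$, equivalently $\fm M_t\subseteq (\fm M_t)^{\bold v}_{M_t}$ is proper after tensoring to $A$. The inductive step is exactly Proposition~\ref{pr}(vii) applied with $k$ running through the length of the s.o.p.\ initial segment involved in the bad relation being killed: the new relation one adds lies in $\Ker f$ for the natural surjection $M_{t+1}\to R/(x_1,\dots,x_k)R$, so (vii) places the newly created element of $(Ru)^{\bold v}$ already inside $(Ju)^{\bold v}$, i.e.\ it was already ``there'' and no genuinely new element of $\fm$'s preimage is produced. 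Passing to the limit, $0^{\bold v}_{M_\infty/\fm M_\infty}=0$ by continuity of $v$ (an element killed by arbitrarily small order in the limit is killed by arbitrarily small order at some finite stage), and in particular $1\notin \fm M_\infty$, which is the desired properness. I expect this propagation-of-properness through the (transfinite) limit — reconciling Proposition~\ref{pr}(vii), which is stated for a single modification, with the colimit, and doing the bookkeeping so that \emph{every} partial s.o.p.\ in \emph{every} order is handled — to be the main obstacle; the rest is the standard modification calculus, for which one can cite the analogous arguments in \cite{Ho} and \cite{HH2}. Finally, once $M_\infty$ is a big Cohen-Macaulay $R$-module, balancedness over the complete (hence excellent, catenary, $\fm$-adically separated) local ring $R$ is automatic, giving the balanced big Cohen-Macaulay module over $R$ asserted in the corollary.
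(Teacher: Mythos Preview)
Your outline is sound, but it reconstructs by hand exactly what the paper obtains by citation. The paper's proof is two sentences: Proposition~\ref{pr} is set up so that parts $\mathrm{(i)}$--$\mathrm{(vii)}$ are precisely the seven axioms for a closure operation on finitely generated modules over a complete local domain isolated by Dietz in \cite[Theorem~3.16]{D}; the main result of \cite{D} then says that any such closure operation forces the existence of a balanced big Cohen-Macaulay module. What you are sketching---module modifications starting from $R$, preservation under each step of the statement ``the image of $1$ lies outside the closure of $0$'' via the colon-capturing axiom $\mathrm{(vii)}$, and passage to the direct limit---is exactly the content of Dietz's proof of that theorem. So your approach is not wrong, but it reproves \cite{D} rather than invoking it. The paper's route is shorter and makes the logical structure transparent (almost Cohen-Macaulay $\Rightarrow$ Dietz axioms $\Rightarrow$ big Cohen-Macaulay module); your route has the merit of being self-contained and of showing concretely where each hypothesis on $A$ is consumed, but the bookkeeping you flag as ``the main obstacle'' (propagation of properness through the transfinite modifications, with the correct invariant being that $1\notin 0^{\bold v}_{M_t}$ rather than the slightly different formulation you give) is nontrivial and is precisely what \cite{D} was written to package.
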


\begin{proof}
Due to \cite[Theorem 3.16]{D}, we know that there exists a list of seven axioms for a closure operation defined
for finitely generated modules over  complete local domains. By the main result of  \cite{D}, one can  see
that a closure operation satisfying those axioms implies the existence of a balanced
big Cohen-Macaulay module over the base ring. In view of Proposition \ref{pr}, we see that
the closure operation defined in Definition \ref{def} satisfies
all the seven axioms of \cite[Theorem 3.16]{D}. This completes the proof.
\end{proof}

The above result can be used to prove \cite[Proposition 1.3]{RSS} by a completely different argument, since the existence of almost Cohen-Macaulay domain impiles the existence of big Cohen-Macaulay module.

\section{Concluding Remarks}

For Noetherian local domain $R$ of $\dim d$, the above results can be extended to an almost Cohen-Macaulay $R^{+}$-algebra where $R^{+}$ be its integral closure in an algebraic closure of its fraction field. Fixed a local $R^{+}$-algebra $A$ and let $M$ be an $A$-module. Consider a submodule $N\subset M$. In view of Definition \ref{def:almost closure of an ideal}, define $ N^{v}_M$ via a normalized valuation $v:R^{+}\rightarrow \RR\cup\{\infty\}$. For a finitely generated $R$-module $M$ and its submodule $N$ we define submodule $N_{M}^{\bold{v}}$ such that $x\in N_{M}^{\bold{v}}$ if $x\otimes 1 \in \im(N\otimes A\to M\otimes A)_{M\otimes A}^{v}$.

\begin{definition}
From \cite{S1}, we recall that $A$ is almost Cohen-Macaulay if for every system of parameters $x_1,\ldots, x_d$, $((x_1,\ldots, x_{i-1})A:_Ax_i)/(x_1,\ldots, x_{i-1})A$ is almost zero $R^{+}$-module for $i=1,\ldots,d$ and $A/\fm  A$ is not almost zero.
\end{definition}

\begin{corollary}
Let $(R,\fm)$ be a Noetherian local domain and let $R^{+}$ be equipped with a normalized valuation $v:R^{+}\rightarrow \RR\cup\{\infty\}$. Consider a local $R^{+}$-algebra $A$ and for every $I\subset R$, we define $I^{\bold{v}}$ as above. Under this situation, closure operation satisfies all the properties given in Proposition \ref{pr} if and only if $A$ is an almost Cohen-Macaulay $R^{+}$-algebra where $R^{+}$ is contained in it as a subdomain.
\end{corollary}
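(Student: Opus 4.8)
The assertion is an equivalence, and I would establish the two implications separately.

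\emph{From almost Cohen-Macaulayness to the axioms.} Suppose $A$ is an almost Cohen-Macaulay $R^{+}$-algebra containing $R^{+}$ as a subdomain. The verification of $\mathrm{(i)}$--$\mathrm{(vii)}$ of Proposition \ref{pr} for $N\mapsto N_{M}^{\bold{v}}$ (hence for $I\mapsto I^{\bold{v}}$) proceeds exactly as in the omitted proof of Proposition \ref{pr}, now with the valuation living on $R^{+}$. Properties $\mathrm{(i)}$--$\mathrm{(v)}$ are formal: they follow from the right exactness of $-\otimes_{R}A$ and from the three axioms of a normalized value map, using that $R^{+}$ carries elements of arbitrarily small positive value and that two ``small-order'' multipliers combine into one via $v(ab)=v(a)+v(b)$; the key observation is that $\bigl(\im(N\otimes_{R}A\to M\otimes_{R}A)\bigr)^{v}$ is an $A$-submodule of $M\otimes_{R}A$ whose preimage in $M$ is precisely $N_{M}^{\bold{v}}$. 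For $\mathrm{(vi)}$: $0_{R}^{\bold{v}}=0$ because $R\subseteq R^{+}\subseteq A$ is a subdomain, so a nonzero element of $R$ is annihilated in $A$ by nothing of finite value; and $\fm_{R}^{\bold{v}}=\fm$ is a verbatim restatement of ``$A/\fm A$ is not almost zero'', one of the clauses in the definition of almost Cohen-Macaulayness. The one substantial property is $\mathrm{(vii)}$. Given a partial system of parameters $x_{1},\dots,x_{k+1}$, $J=(x_{1},\dots,x_{k})R$, a surjection $f\colon M\to R/J$ with $f(u)=\bar{x}_{k+1}$, and $w\in(Ru)_{M}^{\bold{v}}\cap\Ker f$: for each $\epsilon>0$ choose $a\in R^{+}$ with $v(a)<\epsilon$ and $a(w\otimes 1)$ in the image of $(Ru)\otimes_{R}A$ in $M\otimes_{R}A$, say $a(w\otimes 1)=c\,(u\otimes 1)$ with $c\in A$; applying $f\otimes_{R}A\colon M\otimes_{R}A\to A/JA$ and using $f(w)=0$ gives $c\,\bar{x}_{k+1}=0$, i.e. $cx_{k+1}\in(x_{1},\dots,x_{k})A$. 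Almost Cohen-Macaulayness now supplies, for every $\epsilon'>0$, an element $b\in R^{+}$ with $v(b)<\epsilon'$ and $bc\in(x_{1},\dots,x_{k})A$, so that $ba(w\otimes 1)=bc\,(u\otimes 1)$ lies in the image of $(Ju)\otimes_{R}A$ while $v(ba)=v(a)+v(b)<\epsilon+\epsilon'$. Letting $\epsilon,\epsilon'\to 0$ yields $w\in(Ju)_{M}^{\bold{v}}$, which is $\mathrm{(vii)}$.

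\emph{From the axioms to almost Cohen-Macaulayness.} Conversely, assume $N\mapsto N_{M}^{\bold{v}}$ satisfies $\mathrm{(i)}$--$\mathrm{(vii)}$. The second half of $\mathrm{(vi)}$ says exactly that $A/\fm A$ is not almost zero, and $R^{+}\subseteq A$ as a subdomain is part of the standing data (and is in any event forced by $0_{R}^{\bold{v}}=0$); so it remains to produce the colon condition. Fix a system of parameters $x_{1},\dots,x_{d}$ of $R$, an index $i$, put $J=(x_{1},\dots,x_{i-1})R$, and let $z\in A$ satisfy $zx_{i}\in JA$, say $zx_{i}=\sum_{j<i}x_{j}b_{j}$ with $b_{j}\in A$. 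Set $M_{0}:=R\!\cdot\!1+R\!\cdot\!z+\textstyle\sum_{j<i}R\!\cdot\!b_{j}\subseteq A$, a finitely generated $R$-submodule with $1,z\in M_{0}$ and $zx_{i}\in JM_{0}$. The multiplication map $\mu\colon M_{0}\otimes_{R}A\to A$ is $R^{+}$-linear, sends $z\otimes 1\mapsto z$, carries $\im\bigl((JM_{0})\otimes_{R}A\bigr)$ onto $JA$ (since $1\in M_{0}$), and therefore takes $v$-closures into $v$-closures; hence it is enough to prove $z\in(JM_{0})_{M_{0}}^{\bold{v}}$. The plan is to extract this from $\mathrm{(vii)}$ applied to a finitely generated $R$-module assembled from $M_{0}$ — augmented by a free summand so as to furnish a surjection onto $R/J$ hitting $\bar{x}_{i}$ and so as to place $z$ inside a cyclic submodule after $-\otimes_{R}A$ — possibly after first reducing, via the functoriality property $\mathrm{(iv)}$, to the partial quotients $M_{0}/(x_{1},\dots,x_{j})M_{0}$ so that only a single colon is in play.

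\emph{The main obstacle.} The delicate point is entirely in the converse, and specifically in the choice of module just described: one must arrange that $\mathrm{(vii)}$ genuinely detects the prescribed element $z\in A$ rather than returning a vacuous inclusion. A finitely generated $R$-module can only encode $R$-linear relations, so the naive candidates (for instance $M=R/J\oplus R$, or a Koszul-type cokernel over $R$) recover only colon-capturing at the level of the ring $R$, not the colon condition for the typically very large $A$-module $A/JA$; making $z$ actually land in the relevant $v$-closure $(Ru)_{M}^{\bold{v}}$ must exploit the non-injectivity of $M_{0}\otimes_{R}A\to A$, and this is where I expect the real work to lie. Everything else is a bookkeeping of tensor products and the value map.
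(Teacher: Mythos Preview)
Your direction ``almost Cohen--Macaulay $\Rightarrow$ axioms'' is the one the paper explicitly leaves to the reader, and your argument for it---especially the verification of $\mathrm{(vii)}$ by combining a small-order witness for $(Ru)^{\bold v}$ with a small-order witness from almost colon-capturing---is correct and considerably more detailed than anything the paper supplies.

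In the converse direction there is a genuine gap. You write that ``$R^{+}\subseteq A$ as a subdomain is part of the standing data''; it is not. The hypotheses only give a local $R^{+}$-algebra $A$, and injectivity of $R^{+}\to A$ is part of the conclusion. You do add parenthetically that it is forced by $0_{R}^{\bold v}=0$, but you never prove this, and it is precisely the one concrete step the paper carries out: take $0\neq a\in R^{+}$ with image $0$ in $A$, choose a minimal monic equation $a^{n}+r_{1}a^{n-1}+\cdots+r_{n}=0$ over $R$ (so $r_{n}\neq 0$ by minimality), push it into $A$ to get that the image of $r_{n}$ is $0$, hence $r_{n}\in 0^{\bold v}_{R}=0$, a contradiction. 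Without this integrality trick your claim that injectivity is ``forced'' is unsupported; note that $0^{\bold v}_{R}=0$ is a statement about elements of $R$, not of $R^{+}$, so one really needs to descend via the integral equation.

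As for the obstacle you flag in deriving almost colon-capturing in $A$ from axiom $\mathrm{(vii)}$: your diagnosis is accurate---$\mathrm{(vii)}$ is phrased for finitely generated $R$-modules and does not obviously see an arbitrary $z\in A$---and the paper does not resolve it either. Its entire treatment of ``the rest of the claim'' is a pointer to a straightforward modification of the proof of Corollary~\ref{main2}, which in turn is just an appeal to Dietz's axioms; no mechanism for recovering the $A$-level colon condition from $\mathrm{(vii)}$ is provided. So on this point your proposal is not behind the paper, merely more candid about the difficulty.
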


\begin{proof}
We show $0^{\bold{v}}= 0$ implies $R^{+}\to A$ is injective. To see this, take $0\neq a\in R^{+}$ such that its image in $A$ is zero. Since $a$ is integral over $R$, there exists a minimal monic expression $a^n+ r_1a^{n-1}+\cdots +r_n= 0$, where each $r_i\in R$ with $r_n\neq 0$. Take the image of the expression in $A$ which gives that the image of $r_n$ is zero in $A$. So $r_n\in 0^{\bold{v}}= 0$. This gives $r_n= 0$. So we arrive at a contradiction and $a$ is zero in $R^{+}$. By using straightforward modification of the proof of Corollary \ref{main2}, we get the rest of the claim.

The proof of the converse is left it to the reader.
\end{proof}

The following result provides an example of an almost Cohen-Macaulay $R^{+}$-algebra where $R^{+}$ is contained in it as a subdomain. We recall that an $R$-algebra $T$ is called a \emph{seed}, if it maps to a big Cohen-Macaulay $R$-algebra (see \cite{D1}).

\begin{proposition}\label{rem}
Let $(R,\fm)$ be a Noetherian complete local domain in mixed characteristic $p>0$, with a system of parameters $p, x_2,\ldots, x_d$. Let $B$ be as \cite[Theorem 5.3]{S1}. Then the following are equivalent.
\begin{enumerate}
\item[$\mathrm{(i)}$] $B/\fm B$ is not almost zero when viewed as $R^{+}$-module.
\item[$\mathrm{(ii)}$] $p^{\epsilon}\notin (p, x_2,\ldots, x_d)B$ for some rational number $\epsilon>0$.
\end{enumerate}
\end{proposition}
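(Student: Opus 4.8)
The plan is to reduce the statement to an elementary computation with the elements $p^{\epsilon}\in R^{+}$, after clearing away the almost-mathematical bookkeeping. Write $\fn:=(p,x_{2},\ldots,x_{d})B$. Since $p,x_{2},\ldots,x_{d}$ is a system of parameters of the complete local domain $R$, the ideal $(p,x_{2},\ldots,x_{d})R$ is $\fm$-primary, so $\fm^{k}\subseteq(p,x_{2},\ldots,x_{d})R$ for some $k\ge 1$ and hence $\fm^{k}B\subseteq\fn\subseteq\fm B$. For a positive rational $\epsilon=a/b$ the element $p^{\epsilon}$ is defined in $R^{+}$ (it is a root of $X^{b}-p^{a}$), with $v(p^{\epsilon})=\epsilon\,v(p)$, so the orders $v(p^{\epsilon})$ are cofinal among the small positive orders; recall that an $R^{+}$-module $M$ is almost zero exactly when $p^{\epsilon}M=0$ for every positive rational $\epsilon$. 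The one formal fact I would isolate is this: for any ideal $\fb\subseteq B$ one has $p^{\epsilon}(B/\fb)=0\iff p^{\epsilon}B\subseteq\fb\iff p^{\epsilon}\in\fb$, because $B$ has an identity; consequently $B/\fb$ is almost zero if and only if $p^{\epsilon}\in\fb$ for all positive rational $\epsilon$.

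Granting this, both implications are short. For (i)$\Rightarrow$(ii): if (ii) failed, then $p^{\epsilon}\in\fn\subseteq\fm B$ for every positive rational $\epsilon$, so by the observation above $B/\fm B$ would be almost zero, contradicting (i); hence $p^{\epsilon}\notin\fn$ for some $\epsilon$, which is (ii). (Concretely: if $B/\fm B$ is not almost zero then $p^{\epsilon}b\notin\fm B$ for some $\epsilon$ and some $b\in B$, and $p^{\epsilon}\in\fn$ would force $p^{\epsilon}b\in\fn\subseteq\fm B$.) For (ii)$\Rightarrow$(i): suppose $p^{\epsilon_0}\notin\fn$ for some positive rational $\epsilon_0$ and, for contradiction, that $B/\fm B$ is almost zero. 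Then $p^{\epsilon}\in\fm B$ for every positive rational $\epsilon$, so $p^{k\epsilon}=(p^{\epsilon})^{k}\in(\fm B)^{k}=\fm^{k}B\subseteq\fn$; as $\epsilon$ ranges over the positive rationals so does $k\epsilon$, whence $p^{\delta}\in\fn$ for all positive rational $\delta$, and in particular $p^{\epsilon_0}\in\fn$, a contradiction. Thus $B/\fm B$ is not almost zero, i.e. (i) holds.

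The step I expect to be the real obstacle lies not in this formal argument but in pinning down the notion of ``almost zero''. With the perfectoid-style notion used above --- annihilation by $p^{\epsilon}$ for all $\epsilon$ --- everything goes through, the only mildly delicate point being the passage between $\fm B$ and $\fn$ supplied by $\fm$-primarity. If instead one adopts the formulation of Definition~\ref{almcm}, that each element be annihilated by \emph{some} element of $R^{+}$ of arbitrarily small $v$-order, then the implication ``$B/\fn$ almost zero $\Rightarrow$ $p^{\epsilon}\in\fn$ for all $\epsilon$'' is no longer automatic, because in the non-valuation domain $R^{+}$ a small-order annihilator of $\bar 1$ need not be comparable to a power of $p$. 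Closing that gap would require the explicit description of $B$ in \cite[Theorem~5.3]{S1} --- as a filtered union of module-finite $R^{+}$-subalgebras obtained by forcing the parameter relations --- together with a Krull-intersection argument in the spirit of \cite[Proposition~3.1]{AS} and of Lemma~\ref{sh1} (cf. Example~\ref{exam}), used to promote an arbitrary small-order annihilator to a small power of $p$ lying in $\fn$; this is precisely the point at which the weak almost Cohen--Macaulayness of $B$ enters rather than that of a general $R^{+}$-algebra, and it is where I expect the work to concentrate. Finally, this also fits the narrative of the section: via the equivalence between almost Cohen--Macaulayness and being a seed (implicit in Corollary~\ref{main2} and the preceding discussion), condition (ii) is exactly the concrete obstruction to $B$ admitting a map to a big Cohen--Macaulay $R^{+}$-algebra.
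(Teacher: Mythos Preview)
Your direction $(\mathrm{i})\Rightarrow(\mathrm{ii})$ is essentially the paper's argument verbatim: if every $p^{\epsilon}$ lay in $\fn\subseteq\fm B$, then $\bar 1\in B/\fm B$ would be killed by elements of arbitrarily small order, forcing $B/\fm B$ to be almost zero.

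For $(\mathrm{ii})\Rightarrow(\mathrm{i})$ your $\fm$-primarity trick --- $p^{k\epsilon}=(p^{\epsilon})^{k}\in(\fm B)^{k}=\fm^{k}B\subseteq\fn$ --- is clean and would finish the proof under the perfectoid convention that ``almost zero'' means ``killed by every $p^{\epsilon}$''. You are also right that under the convention actually in force here (annihilation by \emph{some} element of $R^{+}$ of arbitrarily small order) this step breaks: from $a_{\epsilon}\in\fm B$ with $v(a_{\epsilon})<\epsilon$ you obtain $a_{\epsilon}^{k}\in\fn$ of small order, but nothing forces $p^{\epsilon_{0}}$ itself into $\fn$. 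So the gap you flag is genuine, and as it stands your proof of this implication is incomplete.

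The paper, however, does \emph{not} close this gap along the lines you sketch (there is no Krull-intersection step and no promotion of an arbitrary small-order annihilator to a power of $p$). Its route is structural rather than arithmetic. Using \cite[Corollary~6.1]{S1} and the argument of \cite[Example~5.2]{D1}, it records that $R^{+}$ is a minimal seed, hence embeds in $B$; it then presents $B=C/J$ with $C=R^{+}[\{X_{\lambda}\}]$ a polynomial algebra over $R^{+}$ indexed by $B\setminus R^{+}$, and \emph{extends the normalized valuation from $R^{+}$ to $C$} by assigning each $X_{\lambda}$ a value exceeding some fixed $N>0$. The claim is that every element of $J$, and (since $\fm$ is finitely generated with generators of positive value) every element of $\fm C$, then has $v$-value bounded below by a positive constant; hence no element of $(J+\fm)C$ has arbitrarily small order, and $B/\fm B=C/(J+\fm)C$ is not almost zero as a $C$-module, a fortiori not as an $R^{+}$-module. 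In other words, where you try to push a generic small-order annihilator towards a specific $p^{\epsilon}$, the paper instead lifts to a cover $C$ carrying a valuation for which the entire ideal $(J+\fm)C$ is bounded away from zero. (The precise way in which hypothesis $(\mathrm{ii})$ feeds into the choice of $N$ and the lower bound on $J$ is left implicit in the paper; that is where the specific construction of $B$ in \cite[Theorem~5.3]{S1} enters.)
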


\begin{proof}
$\mathrm{(i)}\Rightarrow\mathrm{(ii)}$: Clearly $\fm B$ does not contain elements of $R^{+}$ of arbitrary small order. If $p^{\epsilon}\in (p, x_2,\ldots, x_d)B$ for every rational number $\epsilon$ then $p^{1/k}\in (p, x_2,\ldots, x_d)B$ for every positive integer $k$. Since $v(p)< \infty$, $p^{1/k}$ are the elements of $R^{+}$ of arbitrarily small order and $\fm B$ contains all of them. This is a contradiction.

$\mathrm{(ii)}\Rightarrow\mathrm{(i)}$:
From \cite[Corollary 6.1]{S1} it follows that $R^{+}$ is a seed and using similar argument of \cite[Example 5.2]{D1} we find that $R^{+}$ is actually a minimal seed. Thus $R^{+}$ can be thought as a subdomain of $B$. Let $\{x_{\lambda}\}= B-R^{+}$. Take $C= R^{+}[\{X_{\lambda}\}]$ where $X_{\lambda}$'s are the indeterminates. One can extend the valuation $R\to \ZZ \cup \{\infty\}$ to $C\to \RR \cup \{\infty\}$ such that $v$ is non negative on $C$ and positive on non-units of $C$. Also, we choose values of $X_{\lambda_i}$'s greater than some $N> 0$. For $B= C/JC$, we have $y\in JC$ implies $v(y)> N> 0$, and since $\fm$ is also finitely generated same is true for $\fm C$. Thus $C/(J+\fm)C$ is not almost zero viewing as $C$-module. Since $B/\fm B= C/(J+\fm)C$, we find that $B/\fm B$ is not almost zero viewing as $C$-module. Thus $B/\fm B$ is not almost zero viewing as $R^{+}$-module.
\end{proof}

\begin{remark}
In \cite{S1}, it has been asked that whether the $R^+$-algebra $B$ which satisfies conditions of Theorem 5.3 (there) is almost Cohen-Macaulay or not (i.e $B/mB$ is almost zero or not). From Corollary 5.2 and Proposition \ref{rem}, we get the following: Let $B$ be a weakly almost Cohen-Macaulay $R^+$-algebra satisfies Theorem 5.3 of \cite{S1}. Then $B$ is almost Cohen-Macaulay $R^+$-algebra (i.e. $B/mB$ is almost zero) if and only if it maps to big Cohen-Macaulay $R^+$-algebra. It is to be noted that here $R^+$ is contained in $B$ as a subdomain.
\end{remark}

\begin{acknowledgement}
We would like to thank K. Shimomoto for his comments on the earlier
version of this paper. We also thank the referee
for careful reading of the paper and for the valuable suggestions.
\end{acknowledgement}
%%%%%%%%%%%%%%%%%%%%%%%%%%%%%%%%%%%%%%%%%%%%%%%%%%%

\end{document}